\numberwithin{equation}{section}
\newtheorem{thm}{Theorem}[section]
\newtheorem{lem}{Lemma}[section]
\newtheorem{rem}{Remark}[section]
\newtheorem{prop}{Proposition}[section]
\newtheorem{defn}{Definition}[section]
\newtheorem{cor}{Corollary}[section]
\newtheorem*{rem*}{Remark}
\newcommand{\NDE}{{\Delta}^{\operatorname{NUM}}}
\begin{document}

\title[Cahn-Hilliard equations]{Why large time-stepping methods for
the Cahn-Hilliard equation is stable}

\author[D. Li]{Dong Li}
\address[D. Li]
{SUSTech International Center for Mathematics, and Department of Mathematics,  Southern University of Science and Technology, Shenzhen, P.R. China}%
\email{lid@sustech.edu.cn}

\subjclass{35Q35}

\keywords{Cahn-Hilliard, maximum principle}

\begin{abstract}
We consider the Cahn-Hilliard equation with standard double-well potential. We employ a 
prototypical class of first order in time semi-implicit methods with implicit treatment of the linear
dissipation term and explicit extrapolation of the nonlinear term. When the dissipation coefficient
is held small,  a conventional wisdom is to add a judiciously chosen stabilization term
in order to afford relatively large time stepping and speed up the simulation.  In practical numerical
implementations it has been long observed that  the resulting system exhibits remarkable stability 
properties in the regime where the stabilization parameter is $\mathcal O(1)$, the dissipation coefficient
is vanishingly small and the size of the time step is moderately large. In this work we develop a new stability theory to address this perplexing phenomenon.
\end{abstract}
\maketitle
\section{Introduction}
The Cahn-Hilliard equation was introduced in \cite{CH58} to describe the phase separation and
coarsening phenomena (i.e. formation of domains) in binary systems. If $c$ denotes the concentration difference of
the two components, then the Cahn-Hilliard equation can be written as
\begin{align}
\partial_t c =D \Delta \mu = D \Delta ( c^3 -c -\nu \Delta c),
\end{align}
where $D$ is the diffusion coefficient, $\mu$ denotes the chemical potential and
$\sqrt{\nu}$ characterizes the length scale of the transition regions between the domains.
In a typical non-dimensionalized form, we take $D=1$ and rewrite $c$ as $u$. Then 
\begin{align} \label{1}
\begin{cases}
\partial_t u  = \Delta ( f(u) ) - \nu \Delta^2 u,  \quad \boxed{f(u)=u^3-u}, \quad (t,x) \in (0,\infty)\times
\Omega;\\
u \Bigr|_{t=0} =u_0.
\end{cases}
\end{align}
For convenience we take the spatial domain $\Omega$ to be the $2\pi$-periodic
torus $\Omega= [-\pi, \pi]^d$ in physical dimensions $d=1,2,3$. With some adjustments our
analysis can be generalized to other boundary conditions. 
The system \eqref{1} admits a free energy given by
\begin{align}
\mathcal E ( u) = \int_{\Omega} \Bigl( \frac 12 \nu |\nabla u |^2 + \frac 14
(u^2-1)^2 \Bigr) dx. 
\end{align}
For smooth solutions the energy dissipation law takes the form
\begin{align} \label{Eap}
\frac d {dt} \Bigl( \mathcal E (u ) \Bigr)= -\int_{\Omega} | \partial_t \mu |^2 dx.
\quad \mu = u^3-u -\nu \Delta u.
\end{align}
This simple balance relation is quite natural since the system \eqref{1} corresponds to the
gradient flow of $\mathcal E (u)$ in $H^{-1}$.  It is not difficult to check that the average of
$u$ is preserved in time. For convenience we shall tacitly assume $u$ has mean zero in our analysis.
Whilst the a priori control \eqref{Eap} yields strong $H^1$ bounds on the solution, the lack of
maximum principle renders it a nontrivial task to obtain $\mathcal O(1)$ bounds on the maximum
norm of the solution.  In the numerical context this issue turns out to be nontrivial
even in the parabolic setting (cf. \cite{LYZ20, Lia}).

In the past decades, there has been a lot of progress on designing efficient, accurate and
stable numerical schemes to resolve the plethora of vastly different temporal and spatial
scales in phase field models such as Cahn-Hilliard and Allen-Cahn.
Many powerful numerical methods such as  the convex-splitting scheme \cite{chen2012linear,eyre1998unconditionally,wang2010unconditionally}, the stabilization scheme \cite{shen2010numerical,xu2006stability}, the scalar auxiliary variable (SAV) methods \cite{shen2018scalar}, semi-implicit/implicit-explicit (IMEX) schemes \cite{Lia, Lib, lt2021, SShu17} 
are introduced in order to track accurately the dynamical evolution of the phase field
variable.  However many fundamental questions still remain unsolved concerning the analysis
of these schemes. In this work we consider a class of semi-implicit schemes which were considered
by He, Liu and Tang in \cite{HLT07}.  In a semi-discrete formulation, it reads
\begin{align} \label{semi_e1}
\frac{u^{n+1}-u^n}{\tau} = - \nu \Delta^2 u^{n+1} + A \Delta (u^{n+1}-u^n) +\Delta( f(u^n) ), \quad n\ge 0.
\end{align}
where $\tau>0$ is the time step, and $A>0$ is the coefficient for the $\mathcal 
O(\tau)$ regularization term. In \cite{HLT07}, He, Liu and Tang showed that (see
Theorem 1 therein) if
\begin{align} \label{Abad}
A \ge \max_{x\in \Omega}\{ \frac 12 |u^n(x)|^2
+\frac 1 4 |u^{n+1}(x)+u^n(x)|^2 \} -\frac 12, \quad \forall\, n\ge0,
\end{align}
then $\mathcal E(u^n) \le \mathcal E (u^0)$ for all $n\ge 0$.  Note 
that the condition \eqref{Abad} is not satisfactory since the the RHS depends also on $A$. 
An even more startling observation is that one can even take
relatively large time stepping for moderately large $A$ and miniscule dissipation
coefficient $\nu$. For example (see Table $1$ in \cite{HLT07}), numerically one has the following
list of admissible tuple of ($\nu$, $A$, $\tau_c$) where $\tau_c$ is the maximal
time step for which energy decay holds monotonically in time:
\begin{center}
\begin{tabular}{ |c|c|c|c| } 
\hline
 $\nu$ & $A$ & $\tau_c$ \\
\hline
\multirow{3}{5em}{$\nu=0.01$} & $A=0$ & $\tau_c \approx 0.02$ \\ 
& $A=0.5$ & $\tau_c\approx 0.2$ \\ 
& $A=1$ & $\tau_c \approx 0.2$ \\ 
\hline
\multirow{3}{5em}{$\nu=0.001$} & $A=0$ & $\tau_c\approx 0.003$  \\ 
& $A=0.5$ & $\tau_c \approx 0.013$ \\ 
& $A=1$ & $\tau_c \approx 0.03$ \\ 
\hline
\end{tabular}
\end{center}
In particular, for $\nu=0.001$, $A=1$, one can take large time step $\tau\approx 0.03$ whilst
not losing energy dissipation! As far as we know, no existing theory can address this rather perplexing phenomenon.  The purpose of this work is to develop a new stability theory to clarify this
issue. Our first result reveals a deep connection between the stabilization parameter $A$
and the maximum norm of the numerical solution.

\begin{thm}[Uniform in time $L^{\infty}$ bound for \eqref{semi_e1}] \label{thm_bound1}
Assume $A\ge A_{cr}:= 1+ 2\sqrt{1+\frac 43\cdot \frac{\nu}{\tau}}$.
If the initial data $u^0$ satisfies $\|u^0\|_{\infty} \le M=\sqrt{\frac{A+1}3}$, then in
\eqref{semi_e1},
\begin{align*}
\|u^n\|_{\infty} \le M, \qquad\forall\, n \ge 1.
\end{align*}
\end{thm}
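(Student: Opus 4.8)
The plan is to argue by induction on $n$, the base case $n=0$ being the hypothesis $\|u^0\|_\infty\le M$, so that the whole content lies in the inductive step $\|u^n\|_\infty\le M \Rightarrow \|u^{n+1}\|_\infty\le M$. First I would recast \eqref{semi_e1} in resolvent form
\[
\mathcal A u^{n+1}=u^n+\tau\Delta g(u^n),\qquad \mathcal A:=1-\tau A\Delta+\tau\nu\Delta^2,\quad g(u):=f(u)-Au=u^3-(A+1)u,
\]
and record the algebraic identity built into $M$: since $A+1=3M^2$, one has $g(u)=u^3-3M^2u$, so $g$ is strictly decreasing on $[-M,M]$ with $g(\pm M)=\mp 2M^3$, $\|g(u^n)\|_\infty\le 2M^3$, and $0\le -g'(u)\le A+1$ for $|u|\le M$. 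This monotonicity — the fact that the graph of $g$ is tangent to the flat barrier at $\pm M$ — is the structural reason a height-$M$ bound can be propagated.

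The backbone is a maximum-principle property of $\mathcal A^{-1}$. On the torus $\mathcal A$ has Fourier symbol $1+\tau A|k|^2+\tau\nu|k|^4$, and $A\ge A_{cr}$ forces $A>2\sqrt{\nu/\tau}$, so this symbol factors over the reals as $\tau\nu(r_1+|k|^2)(r_2+|k|^2)$ with $r_{1,2}=(A\mp\sqrt{A^2-4\nu/\tau})/(2\nu)>0$. Hence $\mathcal A^{-1}=\tfrac1{\tau\nu}(r_1-\Delta)^{-1}(r_2-\Delta)^{-1}$ is a composition of two screened-Poisson resolvents, each with a nonnegative convolution kernel and satisfying $(r_i-\Delta)^{-1}1=r_i^{-1}$; consequently $\mathcal A^{-1}$ is positivity preserving, fixes constants, and obeys $\|\mathcal A^{-1}\|_{L^\infty\to L^\infty}\le 1$. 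I would then attempt to prove $M-u^{n+1}\ge 0$ (and, using that $g$ is odd, $M+u^{n+1}\ge 0$) by writing, with $d:=M-u^n\ge 0$ and $g(u^n)-g(M)=c\,d$ where $0\le c\le A+1$ by the mean value theorem,
\[
M-u^{n+1}=\mathcal A^{-1}\bigl(d-\tau\Delta(c\,d)\bigr),
\]
and reducing this, via $(r_i-\Delta)^{-1}\Delta=r_i(r_i-\Delta)^{-1}-I$ and partial fractions, to the representation
\[
\nu\,(M-u^{n+1})=\tau\nu\,\mathcal A^{-1}\bigl((\tfrac1\tau-r_1c)\,d\bigr)+(r_2-\Delta)^{-1}(c\,d),
\]
in which the second summand is manifestly nonnegative.

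The main obstacle is the first summand, and it is genuinely delicate. Because $\mathcal A$ contains the biharmonic operator, no pointwise maximum principle is available for $u^{n+1}$ directly (the sign of $\Delta^2u^{n+1}$ at an extremum is uncontrolled), and in fact the solution map is \emph{not} order preserving: the naive sufficient condition $\tfrac1\tau-r_1c\ge 0$ amounts to $c\le \nu r_2<A+1$, which fails exactly where $c$ approaches $A+1$. The resolution must exploit that $c$ and $d$ are correlated: by the mean value theorem $c=(A+1)-3\xi^2$ with $\xi$ between $u^n$ and $M$, so $c$ is large only where $u^n$ is near $0$, i.e.\ where $d=M-u^n$ is of order $M$, while $c\to 0$ as $d\to 0$. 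I expect the proof to close by feeding this correlation into a kernel estimate for the first summand, showing that the negative part of $(\tfrac1\tau-r_1c)\,d$, once smoothed by $\mathcal A^{-1}$, is dominated by the nonnegative resolvent term; the threshold $A\ge A_{cr}$, in the equivalent form $(A-3)(A+1)\ge \tfrac{16}{3}\,\nu/\tau$, is precisely the sharp condition under which this quantitative balance holds. Establishing this domination rigorously — rather than the sign-chasing that would suffice in the purely parabolic second-order case — is the crux of the theorem.
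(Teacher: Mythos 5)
Your setup is correct as far as it goes, and in fact your factorization is the paper's mechanism in disguise: choosing $\beta$ with $\beta(1-\beta)=\nu/(A^2\tau)$ one has $1-A\tau\Delta+\nu\tau\Delta^2=(1-\beta A\tau\Delta)\bigl(1-(1-\beta)A\tau\Delta\bigr)$, so your $r_1,r_2$ are exactly $1/(\beta A\tau)$ and $1/((1-\beta)A\tau)$; your representation for $\nu\,(M-u^{n+1})$ is algebraically valid; and your equivalent form $(A-3)(A+1)\ge\frac{16}{3}\,\nu/\tau$ of $A\ge A_{cr}$ is precisely the compatibility condition $M_0\le M_1$ appearing in Lemma \ref{lem00}. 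But the proof does not close. The final step --- showing that the negative part of $(\frac1\tau-r_1c)\,d$, once smoothed by $\mathcal A^{-1}$, is dominated by $(r_2-\Delta)^{-1}(cd)$ --- is essentially the theorem restated, and you offer no mechanism for it beyond the expectation of \enquote{a kernel estimate} exploiting the correlation between $c$ and $d$. You yourself observe that the naive pointwise condition $c\le \nu r_2$ fails where $u^n$ is small, and the amount of mass that $(r_2-\Delta)^{-1}$ can deliver to a given point depends on the unknown spatial distribution of $cd$; so this is a genuine gap, not a routine verification, and nothing in the proposal indicates how the threshold $A_{cr}$ would actually emerge from such a kernel computation.

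The paper closes the induction without any positivity or order-preservation argument and without the mean-value linearization around the barrier. After extracting the local part via $\tau\Delta(1-\beta A\tau\Delta)^{-1}=-\frac{1}{\beta A}I+\frac{1}{\beta A}(1-\beta A\tau\Delta)^{-1}$, the update is rewritten as
\begin{align*}
\bigl(1-(1-\beta)A\tau\Delta\bigr)u^{n+1}=\frac{1}{\beta A}\Bigl(f_2(u^n)+(1-\beta A\tau\Delta)^{-1}f_1(u^n)\Bigr),
\end{align*}
with $f_2(z)=-z^3+(A+1)z$ and $f_1(z)=z^3-\bigl(1+(1-\beta)A\bigr)z$, and then only the sup-norm contraction of the two second-order resolvents is used, combined with the elementary invariant-region Lemma \ref{lem00a}: $\max_{|z|\le M}|f_2(z)|\le f_2(M)$ because $M\le\sqrt{(A+1)/3}$, and $\max_{|z|\le M}|f_1(z)|\le f_1(M)$ once $M\ge 2\sqrt{(1+(1-\beta)A)/3}$. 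The exact cancellation $f_1(M)+f_2(M)=\beta A M$ then gives $\|u^{n+1}\|_\infty\le M$ after one more resolvent contraction. In other words, the worst case of the nonlocal term is absorbed by its endpoint value $f_1(M)$: absolute-value bounds at the level of the cubic nonlinearities replace the pointwise sign-chasing you attempted, and the correlation you hoped to exploit is encoded in the inequality $\max_{|z|\le M}|f_1(z)|\le f_1(M)$, whose compatibility condition is exactly where $A_{cr}$ enters. If you replace your mean-value decomposition by this splitting, your framework completes to a proof; as written, the crux is conjectured rather than established.
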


\begin{rem}
Note that the threshold value $A_{cr}$ is \emph{not} inversely proportional to the diffusion coefficient
$\nu$. Our $L^{\infty}$ bound here explains that why \eqref{semi_e1} is stable when
the diffusion coefficient $\nu$ is small and large time step $\tau$ is taken.
For example, if $\nu=0.001$ and $\tau=0.03$, then $A_{cr} \approx 3.04$ which is $\mathcal O(1)$!
Of course some further nontrivial work is needed to achieve the optimal stabilization parameter $A\approx 1$. 
\end{rem}

\begin{rem}
There is some flexibility in choosing the upper bound $M$. See Lemma \ref{lem00} where
one can choose any number $M \in [M_0,M_1]$ with $M_1=\sqrt{\frac{A+1}3}$ and
$M_0=2\sqrt{\frac{1+(\frac 12-\sqrt{\frac 14-\frac 1 {A^2}\cdot \frac{\nu}{\tau}})A}3}$.
\end{rem}

Theorem \ref{thm_bound1} elucidates the appearance of $L^{\infty}$ bound due to time
discretization. On the other hand, in practical numerical computations, the bi-harmonic and
Laplacian operators on the RHS of \eqref{semi_e1} would also have to be computed numerically.
In this situation the $L^{\infty}$ bound on the numerical solution certainly needs to be proved as well.
To keep some generality, we denote the numerical approximation of $\Delta$ by $\Delta^{\operatorname{NUM}}$.
For example, on a  1D uniform mesh with mesh size $\Delta x$,
a function $f$ is represented by numerical sequence $f_i$, and a typical central difference scheme on mesh vertex
$i$ (away from the boundary) takes the form
\begin{align*}
(\Delta^{\operatorname{NUM}} f)_i= \frac{f_{i+1}+f_{i-1}-2 f_i} {(\Delta x)^2}.
\end{align*}
In the literature, $\NDE$ is sometimes called the graph Laplacian as it acts on functions defined
 a discrete graph with suitable weights on the edges. We need some \enquote{stability} property of the graph Laplacian $\NDE$.
This is illustrated by the following definition.

\begin{defn} \label{defn1_sharp}
We say a graph Laplacian $\NDE$ on a graph $X$ obeys a sharp $L^{\infty}$ estimate
if the following hold for any constant $k>0$: for any bounded $f: X\to \mathbb R$, there exists a unique
function $u:\, X \to \mathbb R$
solving the equation
\begin{align} \label{defn1_sharp_e1}
u -k \NDE u = f;
\end{align}
moreover
\begin{align*}
\| u \|_{\infty} \le \|f\|_{\infty}.
\end{align*}
In yet other words, for all $k>0$, we have
\begin{align*}
\| (I-k\NDE)^{-1} f \|_{\infty} \le \|f\|_{\infty}.
\end{align*}
\end{defn}

\begin{rem*}
One can certainly consider a more general operator (not necessarily the graph Laplacian) and introduce
the notion of sharp $L^{\infty}$ estimates in more abstract settings. However we do not pursue this generality here.
\end{rem*}

\begin{rem*}
For $\NDE$ introduced via typically finite difference schemes, one can easily verify the
the solvability of \eqref{defn1_sharp_e1} and the sharp $L^{\infty}$ estimate.
See Section \ref{sec_bound1} for some examples.

\end{rem*}

We now consider the following fully discretized (in both space and time) scheme:

\begin{align} \label{semi_e2}
\frac{u^{n+1}-u^n}{\tau} = - \nu (\NDE)^2 u^{n+1} + A \NDE (u^{n+1}-u^n) +\NDE( f(u^n) ), \quad n\ge 0.
\end{align}

\begin{cor} \label{cor_bound1}
Assume $\NDE$ satisfies the sharp $L^{\infty}$ estimate in the sense of Definition \ref{defn1_sharp}.
Let $A\ge A_{cr}:= 1+ 2\sqrt{1+\frac 43\cdot \frac{\nu}{\tau}}$.
If the initial data $u^0$ satisfies $\|u^0\|_{\infty} \le M=\sqrt{\frac{A+1}3}$, then in
\eqref{semi_e2},
\begin{align*}
\|u^n\|_{\infty} \le M, \qquad\forall\, n \ge 1.
\end{align*}

\end{cor}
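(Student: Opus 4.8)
The plan is to observe that the fully discrete scheme \eqref{semi_e2} is obtained from the semi-discrete scheme \eqref{semi_e1} by the formal substitution $\Delta\mapsto\NDE$, and that the proof of Theorem \ref{thm_bound1} invokes the continuous Laplacian \emph{only} through the contraction bound $\|(I-k\Delta)^{-1}g\|_{\infty}\le\|g\|_{\infty}$ valid for every $k>0$. Since this is exactly the sharp $L^{\infty}$ estimate that $\NDE$ is assumed to obey in Definition \ref{defn1_sharp}, the whole argument transfers verbatim; the continuum Laplacian is never used through Fourier analysis, explicit kernels, or a continuum maximum principle. I would replay the three steps below with $\NDE$ in place of $\Delta$.

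First, rearrange \eqref{semi_e2} by collecting the implicit terms,
\begin{align*}
\bigl(I-\tau A\,\NDE+\tau\nu(\NDE)^2\bigr)u^{n+1}=u^n+\tau\,\NDE\bigl(f(u^n)-Au^n\bigr),
\end{align*}
and factor the operator on the left as $(I-k_1\NDE)(I-k_2\NDE)$, where $k_1,k_2$ are the roots of the \emph{scalar} quadratic $k^2-\tau A k+\tau\nu=0$, namely $k_{1,2}=\tfrac12\bigl(\tau A\pm\sqrt{\tau^2A^2-4\tau\nu}\bigr)$. The hypothesis $A\ge A_{cr}$ gives $(A-1)^2\ge 4\bigl(1+\tfrac{4}{3}\tfrac{\nu}{\tau}\bigr)$, hence $A^2>\tfrac{16\nu}{3\tau}>\tfrac{4\nu}{\tau}$, so the discriminant is positive and $k_1,k_2$ are real, distinct, and (having sum $\tau A>0$ and product $\tau\nu>0$) \emph{strictly positive}. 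Because the factorization is purely algebraic in the single operator $\NDE$, it is insensitive to the nature of $\NDE$, and each factor $I-k_i\NDE$ is invertible by Definition \ref{defn1_sharp}; this uniquely determines $u^{n+1}$ and settles solvability.

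Second, peel off one factor to expose the nonlinearity. Writing $h:=f(u^n)-Au^n$ and using the identity $\tau\,\NDE h=\tfrac{\tau}{k_1}h-\tfrac{\tau}{k_1}(I-k_1\NDE)h$, the equation becomes
\begin{align*}
(I-k_1\NDE)\Bigl[(I-k_2\NDE)u^{n+1}+\tfrac{\tau}{k_1}h\Bigr]=u^n+\tfrac{\tau}{k_1}h=:R_1 .
\end{align*}
Inverting the outer factor and then the inner one, and applying the sharp $L^{\infty}$ estimate of Definition \ref{defn1_sharp} twice (once for each resolvent), gives
\begin{align*}
u^{n+1}=(I-k_2\NDE)^{-1}\Bigl[(I-k_1\NDE)^{-1}R_1-\tfrac{\tau}{k_1}h\Bigr],\qquad \|u^{n+1}\|_{\infty}\le\|R_1\|_{\infty}+\tfrac{\tau}{k_1}\|h\|_{\infty}.
\end{align*}

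Third, reduce to a one-variable inequality. Since $R_1$ and $h$ are pointwise functions of $u^n$, namely $R_1(s)=\tfrac{\tau}{k_1}s^3-\tfrac{k_2+\tau}{k_1}s$ and $h(s)=s^3-(1+A)s$, the hypothesis $\|u^n\|_{\infty}\le M$ yields $\|u^{n+1}\|_{\infty}\le\max_{|s|\le M}|R_1(s)|+\tfrac{\tau}{k_1}\max_{|s|\le M}|h(s)|$, and it remains to verify that the right-hand side is $\le M$. Using $M^2=(A+1)/3$ one checks $h'\le 0$ on $[-M,M]$, so $\max_{|s|\le M}|h(s)|=2M^3$; a short analysis of the cubic $R_1$, whose critical point lies in $(0,M)$ because $k_2<\tau A$, bounds $\max|R_1|$. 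The two contributions sum to exactly $M$ when $\nu=0$, and the $k_1,k_2$-dependent correction is controlled precisely by $A\ge A_{cr}$ (the admissible range $M\in[M_0,M_1]$ of the companion lemma reflecting the slack). This scalar estimate is the only genuine analytic content and is identical to the one underlying Theorem \ref{thm_bound1}; I expect it to be the main obstacle, as it is where the sharp threshold $A_{cr}$ is pinned down, while everything else is a transcription of the continuous argument with the contraction property of Definition \ref{defn1_sharp} replacing that of $(I-k\Delta)^{-1}$.
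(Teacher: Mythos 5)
Your proposal is correct and is essentially the paper's own argument: the paper proves Corollary \ref{cor_bound1} by observing that the proof of Theorem \ref{thm_bound1} uses the Laplacian only through the resolvent contraction $\|(I-k\Delta)^{-1}\|_{\infty\to\infty}\le 1$ and hence transfers verbatim with $\Delta$ replaced by $\NDE$, exactly as you argue. Your factorization $(I-k_1\NDE)(I-k_2\NDE)$ with $k_{1,2}$ the roots of $k^2-\tau Ak+\tau\nu=0$ is the paper's $\beta$-splitting in disguise ($k_1=\beta A\tau$, $k_2=(1-\beta)A\tau$ with $\beta(1-\beta)=\nu/(A^2\tau)$), your $R_1$ and $\tfrac{\tau}{k_1}h$ are $\tfrac{1}{\beta A}f_1(u^n)$ and $-\tfrac{1}{\beta A}f_2(u^n)$ from \eqref{lem00_e3}, and the deferred scalar estimate (that $\max_{|s|\le M}|R_1(s)|=R_1(M)$ requires $M\ge 2\sqrt{(1+k_2/\tau)/3}$, with compatibility pinned down by $A\ge A_{cr}$) is precisely Lemmas \ref{lem00} and \ref{lem00a}.
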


We state Corollary \ref{cor_bound1} as a conditional result just to keep some generality.
On the other hand, as was already mentioned earlier,
 the condition on $\NDE$ can be easily checked for typical finite difference
 schemes (see Section \ref{sec_bound1}). The following corollary records this fact.

\begin{cor} \label{cor_bound2}
The graph Laplacian $\NDE$ introduced by typical finite difference schemes
satisfies the sharp $L^{\infty}$ estimate in the sense of Definition \ref{defn1_sharp}.
Therefore Corollary \ref{cor_bound1} holds for \eqref{semi_e2} with corresponding $\NDE$.
\end{cor}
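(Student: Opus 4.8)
The plan is to reduce Corollary \ref{cor_bound2} to two elementary facts about the matrix $M = I - k\NDE$ for each fixed $k>0$: that it is invertible, and that its inverse is an $\ell^{\infty}$-contraction. First I would record the structural features shared by the standard finite difference discretizations of $\Delta$. Acting on a sequence $f=(f_i)$, such a $\NDE$ is a \emph{genuine} graph Laplacian,
\begin{align*}
(\NDE f)_i = \sum_{j\neq i} w_{ij}(f_j - f_i), \qquad w_{ij}\ge 0,
\end{align*}
with only finitely many nonzero weights in each row (a local stencil). The word \emph{graph Laplacian} is precisely what encodes the sign condition $w_{ij}\ge0$, and this nonnegativity is the one property the whole argument rests on. Equivalently, as a matrix $\NDE$ has nonnegative off-diagonal entries $w_{ij}$, nonpositive diagonal entries $-d_i$ with $d_i:=\sum_{j\neq i} w_{ij}$, and vanishing row sums, since $\NDE$ annihilates constants. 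For the second-order central difference in the excerpt one has $w_{i,i\pm1}=(\Delta x)^{-2}$ and $d_i = 2(\Delta x)^{-2}$; the multidimensional analogues are identical.

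Next I would analyze $M = I - k\NDE$. From the above, $M$ has diagonal entries $1 + k d_i > 0$, off-diagonal entries $-k w_{ij}\le 0$, and every row sum equal to $1$, the latter because $\NDE$ has zero row sums. Consequently $M$ is strictly diagonally dominant,
\begin{align*}
|M_{ii}| = 1 + k d_i > k d_i = \sum_{j\neq i} |M_{ij}|,
\end{align*}
so $M$ is invertible and \eqref{defn1_sharp_e1} has a unique solution $u$. This settles the solvability and uniqueness requirement of Definition \ref{defn1_sharp}.

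The heart of the matter is the sharp contraction estimate $\|u\|_\infty\le \|f\|_\infty$, which I would establish by a discrete maximum principle rather than by estimating $M^{-1}$ directly. Writing \eqref{defn1_sharp_e1} componentwise as
\begin{align*}
(1 + k d_i)\,u_i = f_i + k\sum_{j\neq i} w_{ij} u_j,
\end{align*}
let $i^\ast$ be an index at which $u$ attains its maximum. Bounding $u_j\le u_{i^\ast}$ on the right and using $\sum_{j\neq i^\ast} w_{i^\ast j}=d_{i^\ast}$ gives $(1+kd_{i^\ast})u_{i^\ast}\le f_{i^\ast}+kd_{i^\ast}u_{i^\ast}$, whence $\max_i u_i = u_{i^\ast}\le f_{i^\ast}\le \|f\|_\infty$; here the nonnegativity of the $w_{ij}$ is exactly what makes the discarded terms point in the right direction. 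The symmetric argument at a minimizing index yields $\min_i u_i \ge -\|f\|_\infty$, and together these give $\|u\|_\infty\le\|f\|_\infty$. Feeding this now-verified hypothesis into Corollary \ref{cor_bound1} immediately produces the claimed uniform $L^{\infty}$ bound for the fully discrete scheme \eqref{semi_e2}.

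The only genuine subtlety — and the step I would treat with the most care — is the attainment of the extremal indices $i^\ast$ and $i_\ast$. On the $2\pi$-periodic torus the finite difference grid $X$ is a \emph{finite} set, so the maximum and minimum are attained and the argument above is complete. Should one wish to allow an infinite grid, the supremum of $u$ need not be attained, and I would instead run the same estimate along a maximizing sequence $u_{i_n}\to S:=\sup_i u_i$: from $(1+kd_{i_n})u_{i_n}\le f_{i_n}+kd_{i_n}S$ one gets $u_{i_n}\le \|f\|_\infty + k d_{i_n}(S-u_{i_n})$, and the error term vanishes in the limit precisely because a local finite difference stencil has \emph{uniformly bounded} degree, so $d_{i_n}$ stays bounded while $S-u_{i_n}\to0$. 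This again yields $S\le\|f\|_\infty$, and symmetrically for the infimum, so the sharp estimate persists.
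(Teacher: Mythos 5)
Your proposal is correct and matches the paper's own proof in all essentials: the paper verifies Definition \ref{defn1_sharp} for exactly the class of graph Laplacians you describe (nonnegative weights $w_{ij}$ with $w_{ii}=\sum_{j\ne i}w_{ij}$, covering central differences with periodic or Dirichlet boundary conditions and the five-point stencil), and it obtains $\|u\|_{\infty}\le\|f\|_{\infty}$ by the identical discrete maximum principle at a maximizing index. The only cosmetic difference is the solvability step: the paper rewrites $u-k\NDE u=f$ as a fixed-point equation $u=Tu$ and applies the Banach fixed point theorem with contraction factor $\max_i \frac{kw_{ii}}{1+kw_{ii}}<1$, whereas you invoke strict diagonal dominance of $I-k\NDE$ --- two packagings of the same inequality $kd_i<1+kd_i$ --- and your closing remark on infinite grids is extra generality the paper, working on finite grids, does not need (note, though, that there your argument only gives the a priori bound, since diagonal dominance alone no longer yields existence in infinite dimensions; the paper's contraction formulation would).
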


\begin{thm} \label{thm_es_1}
Consider \eqref{semi_e1}. Recall
\begin{align*}
\mathcal E(u) = \int_{\Omega} \Bigl( \frac 12 \nu |\nabla u|^2 +F(u) \Bigr) dx,
\end{align*}
where $F(u)=\frac 14 (u^2-1)^2$. Assume (as in Theorem \ref{thm_bound1}) $A\ge A_{cr}=
1+2\sqrt{1+\frac 43 \cdot \frac{\nu}{\tau}}$ and the initial data $u^0$
satisfies $\|u^0\|_{\infty} \le \sqrt{\frac{A+1}3}$. Then
\begin{align*}
& \mathcal E(u^{n+1}) + \frac A 2 \| u^{n+1}-u^n \|_2^2 \notag \\
& \quad + \tau \| \nabla \bigl(  -\nu \Delta u^{n+1} + A (u^{n+1}-u^n) + f(u^n) \bigr) \|_2^2
\notag \\
& \quad \le \mathcal E(u^n), \qquad \forall\, n\ge 0.
\end{align*}
In particular
\begin{align*}
\mathcal E(u^{n+1}) \le \mathcal E(u^n), \qquad\forall\, n\ge 0.
\end{align*}
\end{thm}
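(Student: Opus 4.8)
The plan is to test the scheme against the natural discrete chemical potential and to use the $L^{\infty}$ bound of Theorem \ref{thm_bound1} to control the non-convexity of the double-well potential. First I would introduce the shorthand
\[
w^{n} := -\nu \Delta u^{n+1} + A(u^{n+1}-u^n) + f(u^n),
\]
so that \eqref{semi_e1} reads simply $(u^{n+1}-u^n)/\tau = \Delta w^{n}$. The target inequality asserts that $\mathcal E(u^{n+1}) + \frac A2 \|u^{n+1}-u^n\|_2^2 + \tau \|\nabla w^{n}\|_2^2 \le \mathcal E(u^n)$, so it suffices to produce an upper bound on the energy increment $\mathcal E(u^{n+1}) - \mathcal E(u^n)$ of this precise form.

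Next I would split the energy difference into its gradient and potential pieces. For the gradient piece I would invoke the elementary identity $|a|^2 - |b|^2 = 2a\cdot(a-b) - |a-b|^2$ with $a=\nabla u^{n+1}$, $b=\nabla u^n$, followed by integration by parts on the periodic torus (where no boundary terms arise), giving
\[
\int_\Omega \tfrac\nu2\bigl(|\nabla u^{n+1}|^2 - |\nabla u^n|^2\bigr)\,dx = -\nu\int_\Omega \Delta u^{n+1}\,(u^{n+1}-u^n)\,dx - \tfrac\nu2 \|\nabla(u^{n+1}-u^n)\|_2^2.
\]
For the potential piece I would use Taylor's theorem with Lagrange remainder: since $F'=f$ and $F''(s)=3s^2-1$, at each point $x$ there is a value $\xi$ between $u^n(x)$ and $u^{n+1}(x)$ with
\[
F(u^{n+1}) - F(u^n) = f(u^n)(u^{n+1}-u^n) + \tfrac12 F''(\xi)\,(u^{n+1}-u^n)^2.
\]

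The crux of the argument, and the step I expect to carry everything, is the pointwise bound on $F''(\xi)$. By Theorem \ref{thm_bound1} both $\|u^n\|_\infty$ and $\|u^{n+1}\|_\infty$ are at most $M=\sqrt{(A+1)/3}$, so $|\xi|\le M$ and hence $F''(\xi) = 3\xi^2-1 \le 3M^2-1 = A$. This is exactly why $M$ was calibrated to satisfy $3M^2-1=A$: the convexity defect of the double well is precisely absorbed by the stabilization coefficient $A$. It follows that
\[
\int_\Omega \bigl(F(u^{n+1}) - F(u^n)\bigr)\,dx \le \int_\Omega f(u^n)(u^{n+1}-u^n)\,dx + \tfrac A2 \|u^{n+1}-u^n\|_2^2.
\]

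Finally I would recombine the pieces. The terms linear in $u^{n+1}-u^n$ assemble into $\int_\Omega(-\nu\Delta u^{n+1}+f(u^n))(u^{n+1}-u^n)\,dx$, and writing $-\nu\Delta u^{n+1}+f(u^n) = w^{n} - A(u^{n+1}-u^n)$ converts this into $\int_\Omega w^{n}(u^{n+1}-u^n)\,dx - A\|u^{n+1}-u^n\|_2^2$. Together with the $\frac A2\|u^{n+1}-u^n\|_2^2$ from the potential estimate, the $A$-terms collapse to $-\frac A2\|u^{n+1}-u^n\|_2^2$. Using the scheme $u^{n+1}-u^n = \tau\Delta w^{n}$ and one further integration by parts gives $\int_\Omega w^{n}(u^{n+1}-u^n)\,dx = -\tau\|\nabla w^{n}\|_2^2$. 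Collecting everything yields
\[
\mathcal E(u^{n+1}) - \mathcal E(u^n) \le -\tfrac A2\|u^{n+1}-u^n\|_2^2 - \tau\|\nabla w^{n}\|_2^2 - \tfrac\nu2\|\nabla(u^{n+1}-u^n)\|_2^2,
\]
which is in fact marginally stronger than the claim, retaining the nonnegative term $\frac\nu2\|\nabla(u^{n+1}-u^n)\|_2^2$. Discarding it gives the stated estimate and, a fortiori, the monotone energy decay $\mathcal E(u^{n+1}) \le \mathcal E(u^n)$.
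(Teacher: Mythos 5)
Your proof is correct and takes essentially the same approach as the paper's: both pair the increment $u^{n+1}-u^n$ with the discrete chemical potential $H=w^n$, use the $L^\infty$ bound of Theorem \ref{thm_bound1} so that the second-order Taylor term of $F$ is controlled by $\frac12(3M^2-1)=\frac A2$, and conclude via $\int_\Omega w^n(u^{n+1}-u^n)\,dx=-\tau\|\nabla w^n\|_2^2$. The only cosmetic differences are your Lagrange-form remainder in place of the paper's integral remainder, and your exact identity for the gradient piece, which retains the harmless extra term $\frac{\nu}{2}\|\nabla(u^{n+1}-u^n)\|_2^2$ that the paper discards through the inequality $a^2-ab\ge\frac12 a^2-\frac12 b^2$.
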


\begin{rem*}
To understand the role of the  stabilization term $A\Delta (u^{n+1}-u^n)$, it is useful
to consider the general case
\begin{align*}
\frac{u^{n+1}-u^n}{\tau} = -\nu \Delta^2 u^{n+1} + B (u^{n+1}-u^n)+\Delta f(u^n),
\end{align*}
where $B$ is an operator to be determined. Taking the $L^2$ inner product with $(-\Delta)^{-1}(u^{n+1}-u^n)$ on both
sides, one arrives at
\begin{align*}
&\frac{1}{\tau} \||\nabla|^{-1}(u^{n+1}-u^n)\|_2^2 + E_{n+1}-E_n+ \frac{\nu}2 \| \nabla (u^{n+1}-u^n) \|_2^2
+ ( B(u^{n+1}-u^n), (-\Delta)^{-1} (u^{n+1}-u^n) ) \notag \\
\le\; & \frac L2 \| u^{n+1}-u^n \|_2^2,
\end{align*}
where $L= \sup_{0\le s\le 1} \| f^{\prime}(u^{n}+s(u^{n+1}-u^n) ) \|_{\infty}$ and we have denoted $E_n =\mathcal E (u^n)$.  It should be noted here the rough estimate of $f^{\prime}$
makes no use of the spectral information around linearization of the continuous PDE solution.
Clearly if $B\equiv 0$,  then
to ensure $E_{n+1}\le E_n$, one must enforce
\begin{align*}
\frac{1}{\tau} \||\nabla|^{-1}(u^{n+1}-u^n)\|_2^2 +\frac{\nu}2 \| \nabla (u^{n+1}-u^n) \|_2^2
\ge \frac L2 \| u^{n+1}-u^n \|_2^2.
\end{align*}
In view of the interpolation inequality (for mean-zero functions)
$$\|g\|_2 \le \| |\nabla|^{-1}g \|_2^{\frac 12} \| \nabla g \|_2^{\frac 12}$$
 and Cauchy-Schwartz, we
deduce the constraint
\begin{align*}
2 \sqrt{\frac {\nu}{2\tau}} \ge \frac L2 \Rightarrow \tau \le \frac{8\nu}{L^2}.
\end{align*}
This is the main reason why small time step $\tau$ is needed when $\nu$ is small and
 no stabilization term is present. On
the other hand, from the above computation, one can also see the necessity of having the operator
 $B = \operatorname{const} \cdot\Delta$:
it is precisely used to balance out the term $\frac L2 \|u^{n+1}-u^n\|_2$ on the RHS.

\end{rem*}

The rest of this paper is organized as follows. In the next section we give the proof of the
main result Theorem \ref{thm_bound1}. In Section 3 we give a resolvent bound. 
In the last section we prove Theorem \ref{thm_es_1}.

\section{Proof of Theorem \ref{thm_bound1}, Corollary \ref{cor_bound1} and \ref{cor_bound2}}
\label{sec_bound1}
\subsection*{Proof of Theorem \ref{thm_bound1}}
Write
\begin{align*}
u^{n+1} - u^n = -\nu \tau \Delta^2 u^{n+1} + A \tau \Delta (u^{n+1}
-u^n) +\tau \Delta (f(u^n)).
\end{align*}

Let $\beta>0$ be a parameter whose value will be chosen later. Then
\begin{align*}
(1-\beta A \tau \Delta) (u^{n+1}-u^n) &=-\nu \tau \Delta^2 u^{n+1}
+(1-\beta)A\tau \Delta (u^{n+1}-u^n) +\tau \Delta (f(u^n))
\notag \\
& = \tau \Delta \bigl( (1-\beta)A -\nu \Delta\bigr) u^{n+1} +\tau
\Delta \bigl( f(u^n) - (1-\beta)A u^n \bigr).
\end{align*}
Now choose $\beta$ such that
\begin{align*}
 \frac 1 {\beta A \tau} = \frac{(1-\beta)A} {\nu}
 \end{align*}
or simply
\begin{align*}
\beta(1-\beta) = \frac {\nu} { A^2 \tau}.
\end{align*}
The existence of $\beta$ is out of question since $\nu/(A^2 \tau)
\le 1/4$ by assumption (see below).

Then clearly
\begin{align*}
&u^{n+1}-u^n \notag \\
 =&(1-\beta)A\tau \Delta u^{n+1} + (1-\beta A \tau
\Delta)^{-1} \tau \Delta \bigl( f(u^n) - (1-\beta)A u^n \bigr).
\end{align*}

Rearranging the terms, we get
\begin{align*}
&\bigl( 1- (1-\beta)A\tau \Delta \bigr) u^{n+1} \notag \\
= & \; u^n + (1-\beta A \tau \Delta)^{-1} \tau \Delta \bigl( f(u^n)
- (1-\beta)A u^n \bigr).
\end{align*}

The proof of Theorem \ref{thm_bound1} then follows from Lemma \ref{lem00} below.

\begin{lem} \label{lem00}
Let $k=\nu/\tau$. Define $A_{cr}=1+2\sqrt{1+\frac43k}$. If $A\ge A_{cr}$,
then the following hold:

\begin{itemize}
\item $\nu/A^2\tau\le \frac 14$ and $\beta=\frac 12 +\sqrt{\frac 14
- \frac 1 {A^2} k} \in[\frac 12, 1)$.
\item Define $M_0=2\sqrt{\frac{1+(1-\beta)A}3}$,
$M_1=\sqrt{\frac{A+1}3}$. Then $M_0\le M_1$.
\item For any $M$ with $M_0\le M\le M_1$,  if $\|u^n\|_{\infty}\le
M$, then
\begin{align*}
\| u^n + \tau \Delta (1-\beta A\tau \Delta)^{-1} \bigl( f(u^n) -
(1-\beta) A u^n \bigr) \|_{\infty} \le M;
\end{align*}
and consequently $\|u^{n+1}\|_{\infty} \le M$.
\end{itemize}
\end{lem}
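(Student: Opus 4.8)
The plan is to prove Lemma \ref{lem00} in three stages corresponding to its three bullet points, treating the third as the crux. Let me set $k=\nu/\tau$ throughout.

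\medskip

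\textbf{First bullet (algebraic setup for $\beta$).} The condition $A\ge A_{cr}=1+2\sqrt{1+\tfrac43 k}$ should give $\nu/(A^2\tau)=k/A^2\le\tfrac14$ after a direct computation: since $A\ge A_{cr}>2\sqrt{k}$ one expects $A^2\ge 4k$, hence $k/A^2\le 1/4$. This guarantees the discriminant $\tfrac14-\tfrac{1}{A^2}k\ge0$, so $\beta=\tfrac12+\sqrt{\tfrac14-\tfrac{1}{A^2}k}$ is well defined and lies in $[\tfrac12,1)$. I would verify $\beta(1-\beta)=k/(A^2)$ directly from the quadratic, consistent with the choice made in the body of the proof.

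\medskip

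\textbf{Second bullet ($M_0\le M_1$).} Here I would show $2\sqrt{\tfrac{1+(1-\beta)A}3}\le\sqrt{\tfrac{A+1}3}$, which upon squaring and clearing the common factor $\tfrac13$ reduces to $4\bigl(1+(1-\beta)A\bigr)\le A+1$, i.e. $4(1-\beta)A\le A-3$. Since $1-\beta=\tfrac12-\sqrt{\tfrac14-\tfrac{k}{A^2}}\le\tfrac12$, this should follow once one checks the inequality at the threshold $A=A_{cr}$ and confirms monotonicity in $A$; the explicit form of $\beta$ makes $(1-\beta)A$ controllable, and I expect the $A_{cr}$ definition to be exactly calibrated so that this holds with equality or near-equality at the boundary.

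\medskip

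\textbf{Third bullet (the $L^\infty$ invariance).} This is the main obstacle. Writing the operator as $u^n+\tau\Delta(1-\beta A\tau\Delta)^{-1}g(u^n)$ with $g(u^n)=f(u^n)-(1-\beta)Au^n$, the key structural identity is $\tau\Delta(1-\beta A\tau\Delta)^{-1}=\tfrac{1}{\beta A}\bigl((1-\beta A\tau\Delta)^{-1}-I\bigr)$, which converts the expression into a convex-type combination $u^n-\tfrac{1}{\beta A}g(u^n)+\tfrac{1}{\beta A}(1-\beta A\tau\Delta)^{-1}g(u^n)$. The plan is to invoke the sharp $L^\infty$ estimate (or its continuous analogue, the parabolic maximum principle for $1-k'\Delta$) so that $\|(1-\beta A\tau\Delta)^{-1}g(u^n)\|_\infty\le\|g(u^n)\|_\infty$, reducing everything to a pointwise scalar estimate on $\phi(s):=s-\tfrac{1}{\beta A}g(s)$ and on $\tfrac{1}{\beta A}g(s)$ for $|s|\le M$. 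The heart of the matter is then a calculus problem: show that $|s|\le M$ forces both $|\phi(s)|$ and the resulting combination to stay in $[-M,M]$. Since $g(s)=s^3-s-(1-\beta)As=s^3-(1+(1-\beta)A)s$, I would analyze the cubic $\phi(s)=(1-\tfrac{1}{\beta A})s+\tfrac{1}{\beta A}(1+(1-\beta)A)s-\tfrac{1}{\beta A}s^3$ on $[-M,M]$, locating its critical points and checking its extreme values against $M$; the choices $M_0,M_1$ are precisely the thresholds that make $\max_{|s|\le M}|\phi(s)|\le M$, and the lower bound $M\ge M_0$ together with the upper bound $M\le M_1$ should pin down the sign conditions on $\phi'$ and the endpoint values. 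I expect the delicate point to be handling the contribution of the resolvent term, whose magnitude is bounded by $\|g(u^n)\|_\infty$ but whose sign is uncontrolled, so the scalar inequality must be robust enough to absorb a full $\pm\|g\|_\infty/(\beta A)$ deviation while still landing inside $[-M,M]$.
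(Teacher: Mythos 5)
Your proposal follows essentially the same route as the paper's proof: the same resolvent identity $\tau\Delta(1-\beta A\tau\Delta)^{-1}=\frac{1}{\beta A}\bigl((1-\beta A\tau\Delta)^{-1}-I\bigr)$, the same maximum-principle bound on the resolvent, and the same splitting into the two cubics $f_2(s)=-s^3+(A+1)s$ (which is exactly $\beta A\,\phi(s)$) and $f_1(s)=s^3-\bigl(1+(1-\beta)A\bigr)s$, where the constraints $M\ge M_0=2\sqrt{\frac{1+(1-\beta)A}{3}}$ and $M\le M_1=\sqrt{\frac{A+1}{3}}$ force each cubic to attain its maximum modulus on $[-M,M]$ at the endpoint (the paper's Lemma \ref{lem00a}), so the triangle inequality closes with zero slack via the exact cancellation $f_1(M)+f_2(M)=\beta A M$, and the calibration you anticipated is indeed exact: $M_0\le M_1$ is equivalent to $(A-3)(A+1)\ge\frac{16}{3}k$, i.e.\ $A\ge A_{cr}$. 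The only blemish is an algebra slip in your expanded $\phi$: since $g(s)=s^3-\bigl(1+(1-\beta)A\bigr)s$, the linear coefficient of $\phi(s)=s-\frac{1}{\beta A}g(s)$ is $1+\frac{1+(1-\beta)A}{\beta A}=\frac{A+1}{\beta A}$, not $\bigl(1-\frac{1}{\beta A}\bigr)+\frac{1+(1-\beta)A}{\beta A}=\frac{1}{\beta}$ as written; this is harmless since your verbal definition of $\phi$ is correct, but with the miswritten coefficient the endpoint-monotonicity window for $\phi$ would shrink to $\sqrt{A/3}<M_1$ and the argument at $M=M_1$ would not close.
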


\begin{rem}
Lemma \ref{lem00} shows that for $k>0$, the nonlocal operator $(1-k\Delta)^{-1} \Delta$ exhibits some
form of maximum principle. Interestingly there exist also some \enquote{inverse Sobolev} type equalities
for this operator, see \cite{LYZ13} for more details.
\end{rem}
To complete the proof of Lemma \ref{lem00}, we need the following
simple lemma which in a sense identifies the \enquote{invariant
region} of certain auxiliary cubic polynomials.

\begin{lem} \label{lem00a}
Let $\alpha_1>0$ and $f_1(x)=x^3-\alpha_1 x$. If $L\ge 2
\sqrt{\frac{\alpha_1}3}$, then
\begin{align} \label{lem00a_e1}
\max_{|x|\le L} |f_1(x)| \le f_1(L).
\end{align}
Similarly let $\alpha_2>0$ and $f_2(x)=-x^3+\alpha_2 x$. If $0<L\le
\sqrt{\frac{\alpha_2}3}$, then
\begin{align} \label{lem00a_e2}
\max_{|x|\le L} |f_2(x)| \le f_2(L).
\end{align}
\end{lem}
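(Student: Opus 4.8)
The plan is to exploit the oddness of both cubics so that the problem reduces to a one-sided analysis on $[0,L]$, and then to locate the extrema by elementary calculus. Since $f_1(-x)=-f_1(x)$, the function $|f_1|$ is even, so $\max_{|x|\le L}|f_1(x)| = \max_{0\le x\le L}|f_1(x)|$, and similarly for $f_2$. This removes the absolute-value complications coming from the negative axis and lets me work with a single monotonicity picture on each side.

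For the first estimate \eqref{lem00a_e1}, I would first differentiate: $f_1'(x)=3x^2-\alpha_1$, whose only positive root is $x_*=\sqrt{\alpha_1/3}$. On $[0,L]$ the function $f_1$ decreases on $[0,x_*]$ and increases on $[x_*,L]$, the inclusion $x_*\le L$ being guaranteed by $L\ge 2\sqrt{\alpha_1/3}>x_*$. Hence its maximum on $[0,L]$ is $\max\{f_1(0),f_1(L)\}=f_1(L)$ (note $f_1(L)>0$ since $L>\sqrt{\alpha_1}$), while its minimum is the interior value $f_1(x_*)=-\frac{2\alpha_1}{3}\sqrt{\alpha_1/3}<0$. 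Consequently $|f_1(x)|\le\max\{f_1(L),-f_1(x_*)\}$ on $[0,L]$, and \eqref{lem00a_e1} reduces to the single scalar inequality $f_1(L)\ge -f_1(x_*)=\frac{2\alpha_1}{3}\sqrt{\alpha_1/3}$.

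The key step, and the main (though mild) obstacle, is verifying this last inequality together with the sharpness of the threshold. I would observe that for $L\ge x_*$ one has $f_1'(L)=3L^2-\alpha_1\ge 0$, so $L\mapsto f_1(L)$ is nondecreasing on $[x_*,\infty)$; it therefore suffices to check the inequality at the endpoint $L=2\sqrt{\alpha_1/3}$. A direct substitution gives $f_1(2\sqrt{\alpha_1/3})=\frac{2\alpha_1}{3}\sqrt{\alpha_1/3}$, which matches $-f_1(x_*)$ exactly. This both closes the argument and explains why the constant $2\sqrt{\alpha_1/3}$ is the precise critical value at which equality first holds.

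For the second estimate \eqref{lem00a_e2} the situation is simpler: $f_2'(x)=-3x^2+\alpha_2\ge 0$ for $0\le x\le\sqrt{\alpha_2/3}$, so under the hypothesis $0<L\le\sqrt{\alpha_2/3}$ the function $f_2$ is nondecreasing on $[0,L]$ and satisfies $f_2(0)=0$, whence $0\le f_2(x)\le f_2(L)$ there. Combining with the oddness reduction yields $\max_{|x|\le L}|f_2(x)|=f_2(L)$, which is \eqref{lem00a_e2}. No sharp threshold comparison is needed in this case, since the restriction on $L$ keeps the interval strictly to the left of the interior critical point $\sqrt{\alpha_2/3}$.
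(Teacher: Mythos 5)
Your proof is correct and follows essentially the same route as the paper: locate the critical points $\pm\sqrt{\alpha_1/3}$, verify $f_1(L)\ge -f_1(x_*)$ with equality exactly at the threshold $L=2\sqrt{\alpha_1/3}$, and use the monotonicity of $f_2$ on $[0,\sqrt{\alpha_2/3}]$. You merely make explicit what the paper leaves to \enquote{an inspection of the graph} — the oddness reduction to $[0,L]$ and the monotonicity of $L\mapsto f_1(L)$ on $[x_*,\infty)$ — which is a welcome level of detail but not a different argument.
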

\begin{proof}[Proof of Lemma \ref{lem00a}]
For $f_1(x)$, calculating $f_1^{\prime}(x)=0$ yields that $x_1=\pm
\sqrt{\alpha_1/3}$. It is then easy to check that at $L=2x_1$,
$f_1(L)\ge |f_1(x_1)|$. An inspection of the graph of $f_1$ easily
gives \eqref{lem00a_e1}. For  \eqref{lem00a_e2}, one just need to
notice that $f_2^{\prime}\ge 0$ for $x\le \sqrt{\frac{\alpha_2}3}$.

\end{proof}

\begin{proof}[Proof of Lemma \ref{lem00}]
First note that
\begin{align*}
\tau \Delta (1-\beta A\tau \Delta)^{-1} &
= (\tau \Delta - \frac 1 {\beta A} + \frac 1 {\beta A} ) (1-\beta A \tau \Delta)^{-1} \notag \\
& = -\frac 1 {\beta A} + \frac 1 {\beta A} (1-\beta A \tau \Delta)^{-1}.
\end{align*}
Thus
\begin{align}
 & u^n + \tau \Delta (1-\beta A \tau \Delta)^{-1} \bigl( f(u^n) - (1-\beta) A u^n \bigr) \notag \\
 = & u^n - \frac 1 {\beta A} (f(u^n) - (1-\beta)A u^n) + \frac 1 {\beta A}
 (1-\beta A \tau \Delta)^{-1} \bigl( f(u^n) - (1-\beta)A u^n \bigr) \notag \\
 = & \frac 1 {\beta A}
 \Bigl(  \underbrace{-(u^n)^3 + (A+1) u^n}_{:=f_2(u^n)}
 + (1-\beta A \tau \Delta)^{-1}
 \bigl( \underbrace{(u^n)^3 - ((1-\beta)A+1)u^n}_{:=f_1(u^n)} \bigr) \Bigr), \label{lem00_e3}
 \end{align}
where in the last equality above, we plugged in $f(u)=u^3-u$.

By Lemma \ref{lem00a}, we have if $\|u^n\|_{\infty} \le M$, then
\begin{align*}
  &\| (1-\beta A \tau \Delta)^{-1} \bigl( f_1(u^n) \bigr) \|_{\infty} \notag \\
\le & \max_{|z|\le M} |f_1(z)| \le f_1(M),
\end{align*}
provided $M\ge 2 \sqrt{\frac{1+(1-\beta)A}3} $.

Then under the condition $\|u^n\|_{\infty} \le M$ and for $M\le \sqrt{\frac {A+1}3}$ (by using Lemma \ref{lem00a}),
\begin{align*}
\|\text{RHS of \eqref{lem00_e3}}\|_{\infty} & \le \frac 1 {\beta A}\Bigl( \max_{|z|\le M} |f_2(z)| +
 f_1(M) \Bigr) \notag \\
& \le \frac 1 {\beta A} \Bigl( f_2(M) +f_1(M) \Bigr) = M.
\end{align*}

Collecting all the inequalities, we get
\begin{itemize}
\item $\beta(1-\beta) = \frac{\nu}{A^2 \tau} \le \frac 14$, $0<\beta<1$;
\item $2\sqrt{\frac{1+(1-\beta)A} 3} \le \sqrt{\frac {A+1} 3}$.
\end{itemize}

It is then easy to deduce the condition $A\ge A_{cr}$.
\end{proof}

\subsection{Proof of Corollary \ref{cor_bound1} and \ref{cor_bound2}}
We first note that in view of \eqref{defn1_sharp_e1},
 the proof of Corollary \ref{cor_bound1} is a repetition of
that of Theorem \ref{thm_bound1} (with $\Delta$ simply replaced by $\NDE$).
Therefore we only focus on Corollary \ref{cor_bound2}. This amounts to
checking Definition \ref{defn1_sharp} for typical finite difference schemes.
We present several illustrative examples.

\begin{itemize}
\item[Example 1:] 1D central difference with periodic boundary condition.
Let $N\ge 2$ be an integer and $\Delta x>0$.  Let $u=(u_0,\cdots,u_{N-1})$ and define
\begin{align*}
(\NDE u)_i = \frac{u_{i+1} +u_{i-1}-2u_i} { (\Delta x)^2}.
\end{align*}
Here $u_{i+N}=u_i$. With data $f=(f_i)$, we need to examine solvability to the equation
\begin{align} \label{cor_bound2_e001a}
u_i - k (\NDE u)_i =f_i
\end{align}
and prove the estimate
\begin{align} \label{cor_bound2_e001b}
\| u \|_{\infty} \le \|f \|_{\infty}.
\end{align}
First we note that \eqref{cor_bound2_e001b} follows from a simple maximum principle argument:
if $i_1=\operatorname{argmax}(u_i)$, then obviously $(\NDE u)_{i_1}\le 0$, and $u_{i_1}\le f_{i_1}$.
To show existence, we can rewrite \eqref{cor_bound2_e001a} as
\begin{align} \label{cor_bound2_e001c}
u_i = (Tu)_i := \frac {\theta} 2(u_{i-1} + u_{i+1}) + (1-\theta) f_i,
\end{align}
where $\theta = \frac{2k}{2k+(\Delta x)^2} $. Since $0<\theta<1$, easy to check that $T$ is a contraction operator (in $l^{\infty}$-norm)
and
the existence follows from the standard fixed point theorem.\footnote{Actually from \eqref{cor_bound2_e001c}
one can also directly deduce the estimate $\|u\|_{\infty} \le \|f\|_{\infty}$ without appealing to the maximum principle.}

\item[Example 2:] 1D central difference with Dirichlet boundary condition.   This is similar to Example 1
except that the boundary condition is modified to $u_{-1}=u_{N}=0$.  Easy to check that in this case
$\NDE$ still satisfies  Definition \ref{defn1_sharp}.

\item[Example 3:] Graph Laplacian with special weights.  Let $X$ be a finite set with cardinality $|X|=N$.
Without loss of generality we identify $X$ as $\{0,\cdots, N-1\}$.
Let $w_{ij}$, $0\le i,j\le N-1$ be nonnegative numbers such that $w_{ii}=\sum_{j\ne i} w_{ij}$, for all $i$.
For any $u: \, X\to \mathbb R$, define
\begin{align}
(\NDE u)_j =  -w_{ii} u_i+\sum_{j\ne i} w_{ij} u_j. \label{cor_bound2_e001d}
\end{align}
Then $\NDE$ satisfies Definition \ref{defn1_sharp}. Indeed the equation $u- k \NDE u =f$ can be rewritten
as
\begin{align}
u_i = (Tu)_i := \sum_{j\ne i} \frac{kw_{ij}}{1+k w_{ii}} u_j + \frac 1 {1+kw_{ii}} f_i. \label{cor_bound2_e001e}
\end{align}
Easy to check that $\|T(u-v)\|_{\infty} \le \theta \| u-v\|_{\infty}$
with
\begin{align*}
\theta = \max_{1\le i\le N} \frac{kw_{ii}}{1+k w_{ii}} <1.
\end{align*}
The estimate $\|u\|_{\infty} \le \|f\|_{\infty}$ is also obvious.

\begin{rem*}
The above example includes many finite difference schemes as special cases. For example,
on a 2D mesh with mesh size $h$, the usual five-point stencil discretized Laplacian has the
form
\begin{align*}
&(\NDE u)(x_1,x_2) \notag\\
=&\; \frac{ u(x_1-h,x_2)+u(x_1+h,x_2)+u(x_1,x_2-h)+u(x_1,x_2+h)-4u(x_1,x_2)}{ h^2}.
\end{align*}
This certainly can be rewritten in the style of \eqref{cor_bound2_e001d}.

\end{rem*}

\end{itemize}

\section{Improved resolvent bounds}
The resolvent bound $\| (I-k \NDE)^{-1} f \|_{\infty} \le \|f\|_{\infty}$ discussed in the previous
section is generally optimal, as can been seen by taking $f$ to be a constant function. On the other hand,
for Cahn-Hilliard type equations, we usually work with functions with mean zero. As it turns out,
for discretized Laplacians, one can refine the resolvent bound slightly if we restrict to the class of mean-zero functions.

\begin{prop} \label{prop_2.4}
Consider \eqref{cor_bound2_e001c}. There exists a constant $0<\epsilon <1$ (possibly depending on $\theta$
and $N$) such that
\begin{align*}
\|u \|_{\infty} \le \epsilon \|f\|_{\infty},
\end{align*}
for any $f$ with mean zero, i.e. $\sum_{i=0}^{N-1} f_i=0$.
\end{prop}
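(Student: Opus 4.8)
The plan is to upgrade the maximum-principle bound $\|u\|_{\infty}\le\|f\|_{\infty}$ from Example 1 to a strict contraction on the mean-zero subspace, by combining a compactness argument with a strict maximum principle. First I would record that the resolvent preserves the mean-zero constraint: summing \eqref{cor_bound2_e001a} over $i$ and using $\sum_i(\NDE u)_i=0$ (which holds by periodicity, as each $u_j$ occurs in the sum with total coefficient $-2+1+1=0$) gives $\sum_i u_i=\sum_i f_i$. Hence if $f$ has mean zero then so does $u=(I-k\NDE)^{-1}f$, and we may work entirely within the mean-zero subspace.

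Next I would set up the extremal problem. The set $K=\{f:\|f\|_{\infty}=1,\ \sum_{i=0}^{N-1}f_i=0\}$ is compact and nonempty (here $N\ge2$), and the map $f\mapsto\|(I-k\NDE)^{-1}f\|_{\infty}$ is continuous, so it attains its maximum $\epsilon$ on $K$. By the resolvent bound already established in Example 1 we have $\epsilon\le1$; it remains only to exclude $\epsilon=1$, and the resulting $\epsilon$ then depends only on $N$ and $\theta$ as claimed.

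The heart of the matter is a strict maximum principle. Suppose $\epsilon=1$ and pick an extremizer $f\in K$ whose solution $u$ satisfies $\|u\|_{\infty}=1$. Since $\|u\|_{\infty}=\max(\max_i u_i,\,-\min_i u_i)$, after replacing $f$ by $-f$ (and hence $u$ by $-u$) if necessary I may assume $\max_i u_i=1$, attained at some index $i_1$. Evaluating \eqref{cor_bound2_e001a} at $i_1$ gives $u_{i_1}-k(\NDE u)_{i_1}=f_{i_1}$; as $i_1$ is a maximizer we have $(\NDE u)_{i_1}\le0$, while $f_{i_1}\le1=u_{i_1}$ and $k>0$ force $(\NDE u)_{i_1}\ge0$. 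Hence $(\NDE u)_{i_1}=0$, which by the explicit form of the central difference means $u_{i_1-1}+u_{i_1+1}=2u_{i_1}=2$; since each term is $\le1$, both neighbors also equal $1$. Thus each neighbor of $i_1$ is itself a global maximizer, and iterating the same rigidity around the cycle (the periodic graph is connected) propagates $u_i=1$ to every vertex. This contradicts $\sum_i u_i=0$, so $\epsilon<1$.

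The main obstacle is the propagation step: one must verify that a neighbor at which the value equals the global maximum is again a bona fide global maximizer, so the argument reapplies at it verbatim, and that connectivity of the underlying graph then forces the maximum to be attained at every vertex. The remaining ingredients---mean-zero invariance, compactness of $K$, and continuity of $f\mapsto\|u\|_{\infty}$---are routine in this finite-dimensional setting.
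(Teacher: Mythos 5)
Your proof is correct, and it takes a genuinely different route from the paper's. You argue softly: mean-zero invariance of the resolvent, compactness of the extremal set $K=\{f:\|f\|_\infty=1,\ \sum_i f_i=0\}$, and a strict maximum principle to rule out $\epsilon=1$ --- at a global maximizer $i_1$ the two inequalities $(\NDE u)_{i_1}\le 0$ and $k(\NDE u)_{i_1}=u_{i_1}-f_{i_1}\ge 1-\|f\|_\infty= 0$ force $(\NDE u)_{i_1}=0$, the rigidity $u_{i_1-1}+u_{i_1+1}=2$ with $u_j\le 1$ pushes the maximum to both neighbors, and connectivity of the periodic cycle propagates $u\equiv 1$, contradicting $\sum_i u_i=\sum_i f_i=0$. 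The propagation step you flag as the main obstacle does go through verbatim: each neighbor attains the global maximum, so the same two inequalities apply at it. The paper instead proceeds constructively: via the discrete Fourier transform it writes $u=c*f$ with an explicit kernel $c_j=\frac1N\sum_k (1-\theta\cos\frac{2\pi k}N)^{-1}e^{2\pi ijk/N}$, proves $\sum_j c_j=1$ and $\min_j c_j>0$ --- interestingly, by the very same neighbor-propagation rigidity you use, applied to the solution with data $f=(1,0,\dots,0)$ --- and then either invokes the combinatorial extremal Lemma \ref{lem_max_1} to obtain the sharp constant $\epsilon=\sum_{j=N-[\frac N2]}^{N-1}c_j-\sum_{j=0}^{[\frac N2]-1}c_j$, or uses the perturbation $\tilde c_j=c_j-\min_l c_l$ together with $c*f=\tilde c*f$ for mean-zero $f$ to get $\epsilon=1-N\min_j c_j$. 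What each approach buys: the paper's yields an explicit, indeed sharp, value of $\epsilon$, which your compactness argument cannot produce (your extremizer is non-constructive); conversely, your argument avoids Fourier analysis and the convolution structure entirely, so it adapts with little change to general connected graph Laplacians with symmetric weights --- essentially the setting of Proposition \ref{prop_2.5}, where the paper must again establish kernel positivity $\min_{i,l}c_i^{(l)}>0$. One small point worth making explicit in your write-up: mean-zero invariance ($\sum_i u_i=\sum_i f_i$, which your contradiction step uses) follows most directly by summing \eqref{cor_bound2_e001c}, giving $\sum_i u_i=\theta\sum_i u_i+(1-\theta)\sum_i f_i$.
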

\begin{rem}
To see why Proposition \ref{prop_2.4} should hold, one can consider the special case $N=3$. In this
case by using $u_0+u_1+u_2=0$, one can explicitly solve $u_i$ in terms of $f_i$ as
\begin{align*}
u_i = \frac{1-\theta}{1+\frac{\theta}2} f_i.
\end{align*}
Obviously
$\|u\|_{\infty} \le \frac{1-\theta}{1+\frac{\theta}2} \|f\|_{\infty}$.
\end{rem}

To prove Proposition \ref{prop_2.4}, we need a simple lemma. The subtlety lies in
the incorporation of the mean-zero constraint.

\begin{lem}\label{lem_max_1}
Let $N\ge 2$ be an integer. Suppose $0\le c_0\le c_1\cdots\le c_{N-1}$. Let
\begin{align*}
X= \bigl\{ \sigma=(\sigma_0,\cdots,\sigma_{N-1}):\, \max_{j} |\sigma_j|\le 1, \, \sum_{j=0}^{N-1} \sigma_j=0.
\bigr\}.
\end{align*}
Then
\begin{align*}
\max_{\sigma \in X} (c\cdot \sigma) = \sum_{j=N-[\frac N2]}^{N-1} c_j -\sum_{j=0}^{[\frac N2] -1} c_j.
\end{align*}
Here $[x]$ denotes the integer part of any real number $x$, for example $[3/2]=1$.
\end{lem}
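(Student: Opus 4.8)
The plan is to read the statement as a linear program: we are maximizing the linear functional $\sigma\mapsto c\cdot\sigma$ over the polytope $X=[-1,1]^N\cap\{\sigma:\sum_j\sigma_j=0\}$, and the asserted value is exactly the one obtained by pushing the coordinates attached to the largest $c_j$ up to $+1$ and those attached to the smallest $c_j$ down to $-1$. Accordingly I would prove the identity by establishing the two matching inequalities $\max_{\sigma\in X}c\cdot\sigma\ge V$ and $\max_{\sigma\in X}c\cdot\sigma\le V$, where $V:=\sum_{j=N-[N/2]}^{N-1}c_j-\sum_{j=0}^{[N/2]-1}c_j$ is the claimed value.

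For the lower bound I would simply exhibit a feasible $\sigma^{\ast}$ attaining $V$. Set $k=[N/2]$ and define $\sigma^{\ast}_j=+1$ for $N-k\le j\le N-1$, $\sigma^{\ast}_j=-1$ for $0\le j\le k-1$, and $\sigma^{\ast}_j=0$ for any remaining index (there is at most one such index, occurring precisely when $N$ is odd). Then $\max_j|\sigma^{\ast}_j|\le 1$ and $\sum_j\sigma^{\ast}_j=k-k=0$, so $\sigma^{\ast}\in X$, and $c\cdot\sigma^{\ast}=V$ by construction.

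The heart of the argument is the matching upper bound, and the only genuine subtlety, as the lemma statement hints, is how to exploit the mean-zero constraint $\sum_j\sigma_j=0$. The device is to subtract a free multiple of the constraint before estimating: for every $\sigma\in X$ and every $\lambda\in\mathbb R$,
\begin{align*}
c\cdot\sigma=\sum_{j=0}^{N-1}(c_j-\lambda)\sigma_j\le\sum_{j=0}^{N-1}|c_j-\lambda|,
\end{align*}
where the equality uses $\sum_j\sigma_j=0$ and the inequality uses $|\sigma_j|\le 1$. It then remains only to choose the shift $\lambda$ optimally. I would take $\lambda$ to be a median of the sorted sequence $c_0\le\cdots\le c_{N-1}$ and verify that $\sum_j|c_j-\lambda|=V$. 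Because the $c_j$ are already sorted, choosing $\lambda$ so that $[N/2]$ of them lie weakly above it and $[N/2]$ lie weakly below it makes the absolute values telescope: the top $k$ terms contribute $\sum_{j=N-k}^{N-1}(c_j-\lambda)=\sum_{j=N-k}^{N-1}c_j-k\lambda$, the bottom $k$ terms contribute $\sum_{j=0}^{k-1}(\lambda-c_j)=k\lambda-\sum_{j=0}^{k-1}c_j$, the $\pm k\lambda$ contributions cancel, and the middle term (present only for odd $N$) vanishes since $c_j=\lambda$ there. This gives $\sum_j|c_j-\lambda|=V$, hence $c\cdot\sigma\le V$ for all $\sigma\in X$.

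Combining the two inequalities yields the claimed equality. I expect the only delicate point to be the bookkeeping in the final step: one must select $\lambda$ correctly according to the parity of $N$ and confirm that possible ties among the $c_j$ do not corrupt the count of $[N/2]$ entries above and $[N/2]$ below. A clean way to sidestep the tie issue is to avoid talking about strict separation altogether and instead evaluate the convex, piecewise-linear function $\lambda\mapsto\sum_j|c_j-\lambda|$ at the explicit value $\lambda=c_{[N/2]}$ when $N$ is odd and $\lambda=c_{[N/2]-1}$ when $N$ is even, verifying the telescoping identity $\sum_j|c_j-\lambda|=V$ directly in each case.
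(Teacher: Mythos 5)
Your proof is correct, and it takes a genuinely different route from the paper's. The paper argues primally: by compactness the maximum is attained at some $\tilde\sigma$, which (using the sorted order of $c$) can be assumed to have its negative entries on low indices and positive entries on high indices, and then a mass-shifting exchange argument ($\sigma_{j_1}\mapsto\sigma_{j_1}-\epsilon$, $\sigma_{j_1+1}\mapsto\sigma_{j_1+1}+\epsilon$, which cannot decrease the objective since $c_{j_1}\le c_{j_1+1}$) pushes the maximizer to the extreme point $(-1,\dots,-1,1,\dots,1)$ or $(-1,\dots,-1,0,1,\dots,1)$, followed by a parity case analysis. You instead argue by weak LP duality: the feasible point $\sigma^{\ast}$ gives the lower bound, and the mean-zero constraint lets you subtract an arbitrary shift, $c\cdot\sigma=\sum_j(c_j-\lambda)\sigma_j\le\sum_j|c_j-\lambda|$, after which the explicit median choice ($\lambda=c_{[N/2]}$ for odd $N$, $\lambda=c_{[N/2]-1}$ for even $N$) makes the bound telescope exactly to the claimed value $V$; your direct evaluation at a concrete $\lambda$ indeed sidesteps any tie-breaking issues. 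Your argument buys several things: it needs no compactness or attainment of the maximum, no normalization of the maximizer's sign pattern (a step the paper states somewhat informally), and it produces a dual certificate rather than an exchange procedure; it also never uses the hypothesis $c_0\ge 0$, so it proves the identity for arbitrary sorted reals $c_0\le\cdots\le c_{N-1}$, slightly more general than stated. The paper's exchange argument, on the other hand, identifies the actual maximizers (recorded in the remark following the lemma), which your duality bound gives only implicitly through the complementary-slackness structure. Both proofs are complete and of comparable length; yours is arguably the cleaner one for this statement.
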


\begin{rem*}
If $N$ is even, then the maximum of $c\cdot \sigma$ is achieved by
\begin{align*}
\sigma=(-1,-1,\cdots,-1,1,\cdots, 1)
\end{align*}
 with equal number of $1$s and $-1$s. If $N$ is odd, then this is achieved by
\begin{align*}
\sigma=(-1,-1,\cdots,-1,0,1,\cdots,1)
\end{align*}
 with $(N-1)/2$ ones and minus ones.
\end{rem*}

\begin{proof}[Proof of Lemma \ref{lem_max_1}]
Consider the function $f(\sigma) = c\cdot \sigma$. Since $X$ is a compact set, the maximum of $f$ must be
attained at some point $\tilde \sigma=(\tilde \sigma_0,\cdots, \tilde \sigma_{N-1})$.
Since $0\le c_0\le \cdots c_{N-1}$ and $\sum_{j} \tilde \sigma_j =0$, we can assume $\tilde \sigma_0\le \cdots \tilde \sigma_{j_1}
\le 0 \le \tilde \sigma_{j_1+1} \le \cdots \le \tilde \sigma_{N-1}$. By a simple
optimization argument,\footnote{One can fix the sum $\sum_{l=0}^{j_1} \tilde \sigma_l$
and maximize $\sum_{l=0}^{j_1} \tilde \sigma_l \cdot c_l$. Similarly fix $\sum_{l=j_1+1}^{N-1} \tilde \sigma_l$
and maximize $\sum_{l={j_1+1}}^{N-1} \sigma_l \cdot c_l$.
Also observe that one can assume without loss of generality that there is at most one zero in $\tilde \sigma$.}
one can further assume that $\tilde \sigma$ has three possible forms:

\begin{itemize}
\item $\tilde \sigma=(-1,\cdots, -1,\sigma_{j_1}, \sigma_{j_1+1},1,\cdots, 1)$, where $-1<\sigma_{j_1}\le 0$ and $0\le \sigma_{j_1+1}<1$.
Now since $c_{j_1}\le c_{j_1+1}$, for $\epsilon>0$, we have
\begin{align*}
c_{j_1} \sigma_{j_1} + c_{j_1+1} \sigma_{j_1+1} \le c_{j_1} (\sigma_{j_1}-\epsilon)
+ c_{j_1+1} (\sigma_{j_1+1}+\epsilon).
\end{align*}
By using this argument together with the fact $\sum_j \tilde \sigma_j =0$,
it is easy to see that we can change $\tilde \sigma$ to $\tilde \sigma=(-1,\cdots,-1,1,\cdots, 1)$ and the value of
$c\cdot \tilde \sigma$ does not decrease.

\item $\tilde \sigma=(-1,\cdots,-1,\tilde \sigma_{j_1},1,\cdots,1)$ where $-1<\tilde \sigma_{j_1}\le 0$. Since
$\sum_j \tilde \sigma_j=0$, easy to see that in this case we must have $\tilde \sigma_{j_1}=0$.

\item $\tilde \sigma=(-1,\cdots,-1,\tilde \sigma_{j_1},1,\cdots,1$ where $0\le \sigma_{j_1}<1$. Easy to see
that $\tilde \sigma_1 =0$ again due to $\sum_{j} \tilde \sigma_j =0$.
\end{itemize}
The rest of the argument is now obvious. One just need to discuss separately the case $N$ is even and the
case $N$ is odd.
\end{proof}

\begin{proof}[Proof of Proposition \ref{prop_2.4}]

\texttt{Step 1}. We first show that there exists $c=(c_0,\cdots,c_{N-1})$, such that
\begin{align*}
u_k = (c* f)_k =\sum_{j} c_{k-j} f_j,
\end{align*}
with the identification that $c_{k\pm N}=c_k$. This follows easily from the discrete Fourier transform, which
we briefly recall here. For a sequence of numbers $a_0,\cdots,a_{N-1}$, define
\begin{align*}
\hat a_j = \sum_{k=0}^{N-1} a_k e^{-\frac{2\pi i jk} N}.
\end{align*}
Then $a_k$ can be reproduced from $\hat a_j$ by
\begin{align*}
a_k = \frac 1 N \sum_{j=0}^{N-1} \hat a_j e^{\frac{2\pi i jk} N}.
\end{align*}
For any two sequences $a=(a_0,\cdots,a_{N-1})$ and $b=(b_0, \cdots, b_{N-1})$, easy to check that
\begin{align*}
(\widehat{a*b})_k = \hat a_k \hat b_k.
\end{align*}
Now return to \eqref{cor_bound2_e001c}. Clearly
\begin{align*}
(1 - \theta \cos(\frac{2\pi k}N)) \hat u_k = \hat f_k.
\end{align*}
Thus
\begin{align*}
u_j = (c*f)_j,
\end{align*}
where
\begin{align*}
c_j =\frac 1N  \sum_{k=0}^{N-1} \frac 1 {1-\theta \cos(\frac{2\pi k}N) } e^{\frac{2\pi ijk}N}.
\end{align*}

\texttt{Step 2}. We show that $\sum_{j=0}^{N-1} c_j=1$ and
\begin{align} \label{pf_prop_2.4_e30a}
\min_{0\le j\le N-1} c_j >0.
\end{align}
By Step 1, if we solve
\begin{align} \label{pf_prop_2.4_e30}
u_j = \frac {\theta} 2 (u_{j-1} +u_{j+1}) + (1-\theta) f_j,
\end{align}
with $f=(1,0,\cdots,0)$. Then $u_j=c_{j-1}$. By a simple maximum principle argument we have $u_j\ge 0$
for all $j$. Now assume $u_{j_*}=0$ for some $j_*$. Then from \eqref{pf_prop_2.4_e30} evaluated at $j=j_*$,
we get $u_{j_*-1}=u_{j_*+1}=0$. Iterating this argument a couple of times, we get $u_j=0$ for all $j$ which
is obviously impossible. Thus $\min u_j >0$ and \eqref{pf_prop_2.4_e30a} holds. The fact $\sum_j u_j=1$ is
obvious from summing $j$ on both sides of \eqref{pf_prop_2.4_e30}.

\texttt{Step 3}. Define
\begin{align*}
X= \bigl\{ \tilde f=(\tilde f_0,\cdots,\tilde f_{N-1}):\, \max_j|\tilde f_j|\le 1, \, \sum_j \tilde f_j =0\bigr\}.
\end{align*}
By Lemma \ref{lem_max_1} and Step 2, we have
\begin{align*}
\max_{\tilde f \in X} | c\cdot \tilde f | \le
\begin{cases}
 1- 2\sum_{j=0}^{\frac N2-1} c_j,\quad \text{if $N$ is even}, \\
 1-c_{\frac{N-1}2}- 2\sum_{j=0}^{\frac{N-1}2-1} c_j,\quad \text{if $N$ is odd}.
 \end{cases}
\end{align*}
Thus
\begin{align*}
 \max_{\tilde f \in X} | c\cdot \tilde f | \le
1 - N \min_{j} c_j.
 \end{align*}
Therefore
\begin{align*}
\| c* f \|_{\infty} \le \epsilon \|f\|_{\infty},
\end{align*}
where
\begin{align*}
\epsilon \le 1-N\min_j c_j<1.
\end{align*}

\end{proof}
\begin{rem*}
By Lemma \ref{lem_max_1}, one can get the sharp constant
\begin{align*}
\epsilon =\sum_{j=N-[\frac N2]}^{N-1} c_j -\sum_{j=0}^{[\frac N2] -1} c_j.
\end{align*}
On the other hand, to get the bound $\|c* f \|_{\infty} \le (1-N\min_j c_j) \|f\|_{\infty}$, one
could just argue directly without using Lemma \ref{lem_max_1}. Let $\epsilon_0= \min_j c_j$ and
define $\tilde c_j = c_j -\epsilon_0\ge 0$. Then since $f$ has mean zero, we have $c*f =\tilde c*f$.
Thus
\begin{align*}
\|c*f\|_{\infty} &\le \|\tilde c\|_1 \|f\|_{\infty} \notag \\
&= (1- N \epsilon_0) \|f\|_{\infty}.
\end{align*}
A similar perturbation idea is exploited in recent \cite{L13} to show some generalized Poincar\'e inequalities.
\end{rem*}

We record below the generalization of Proposition \ref{prop_2.4}.
\begin{prop} \label{prop_2.5}
Consider \eqref{cor_bound2_e001e}. There exists a constant $0<\epsilon <1$
 such that
\begin{align*}
\|u \|_{\infty} \le \epsilon \|f\|_{\infty},
\end{align*}
for any $f$ with mean zero.
\end{prop}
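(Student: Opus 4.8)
The plan is to exploit the fact that \eqref{cor_bound2_e001e} defines $u$ as a linear function of $f$: writing $K=(I-k\NDE)^{-1}$ for the solution operator, we have $u_i=\sum_j K_{ij}f_j$, and $K$ plays exactly the role of the convolution kernel $c$ in Proposition \ref{prop_2.4}. The three properties of $c$ that drove the earlier proof — that it is nonnegative, that its entries sum to one, and that its minimal entry is strictly positive — have clean matrix analogues for $K$, namely (i) $K_{ij}\ge 0$ for all $i,j$; (ii) every row sums to one, $\sum_j K_{ij}=1$; and (iii) $\min_{i,j}K_{ij}>0$. Once these are in hand, the mean-zero refinement follows from the same perturbation device used in the final Remark after Proposition \ref{prop_2.4}.

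First I would record existence and the structural facts. Existence of $K$ is already contained in Example 3, since the map $T$ in \eqref{cor_bound2_e001e} is an $\ell^\infty$-contraction. For the row sums, I observe that the hypothesis $w_{ii}=\sum_{j\ne i}w_{ij}$ is exactly the statement $\NDE \mathbf{1}=0$, whence $(I-k\NDE)\mathbf{1}=\mathbf{1}$ and therefore $K\mathbf{1}=\mathbf{1}$, i.e. $\sum_j K_{ij}=1$. For nonnegativity and positivity I would write the iteration \eqref{cor_bound2_e001e} as $u=Pu+g$ with $P_{ij}=kw_{ij}/(1+kw_{ii})$ (and $P_{ii}=0$) and $g_i=f_i/(1+kw_{ii})$, so that $K=(I-P)^{-1}\operatorname{diag}(1/(1+kw_{ii}))$ and $(I-P)^{-1}=\sum_{m\ge 0}P^m$ converges because $P$ is substochastic with maximal row sum $\theta<1$. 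Each $P^m$ is entrywise nonnegative, so $K\ge 0$; and $(P^m)_{ij}>0$ precisely when there is a length-$m$ path from $i$ to $j$ along edges with $w_{ij}>0$, so under the (natural) assumption that the weighted graph is connected every entry $K_{ij}$ is strictly positive. Alternatively one may reprove positivity exactly as in Step 2 of Proposition \ref{prop_2.4}: solving $u=Tu$ with $f$ a single unit spike gives a column of $K$, and the maximum-principle iteration ``$u_{j_*}=0$ forces $u_j=0$ at all neighbors of $j_*$'' propagates a zero across the whole connected graph, a contradiction.

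With (i)--(iii) established, the conclusion comes from the mean-zero perturbation. Since $\sum_j f_j=0$, for each fixed $i$ I may subtract the row minimum $\alpha_i:=\min_j K_{ij}>0$ without changing $u_i$, so that $u_i=\sum_j(K_{ij}-\alpha_i)f_j$. As $K_{ij}-\alpha_i\ge 0$, a crude estimate gives $|u_i|\le \|f\|_\infty\sum_j(K_{ij}-\alpha_i)=(1-N\alpha_i)\|f\|_\infty$, using $\sum_j K_{ij}=1$. Taking the maximum over $i$ yields $\|u\|_\infty\le \epsilon\|f\|_\infty$ with $\epsilon=1-N\min_{i,j}K_{ij}$, and $0<\epsilon<1$ because $\min_{i,j}K_{ij}>0$ while $\min_j K_{ij}\le N^{-1}\sum_j K_{ij}=N^{-1}$. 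The hard part will be step (iii): unlike the translation-invariant setting of Proposition \ref{prop_2.4}, there is no convolution structure to lean on here, so the strict positivity of all entries of $K$ must instead be extracted from the connectivity (irreducibility) of the weight matrix, either through the path-counting interpretation of the Neumann series or through the propagation-of-zeros maximum principle described above.
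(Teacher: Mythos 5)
Your proposal is correct and follows essentially the same route as the paper: the paper's proof represents $u_i=\sum_l c^{(l)}_i f_l$ where $c^{(l)}$ solves the scheme with Kronecker-delta data (the columns of your matrix $K$), checks $\min_{i,l} c^{(l)}_i>0$ and $\sum_l c^{(l)}_i=1$, and concludes by the identical subtract-the-minimum trick yielding $\epsilon=1-N\epsilon_0$. The one substantive thing you add is making the connectivity (irreducibility) of the weight graph explicit in the positivity step --- which the paper waves through as \enquote{easy to check} --- and this hypothesis is genuinely needed: for a disconnected weight matrix (e.g.\ $w\equiv 0$, so that $u=f$) one has $\min_{i,j}K_{ij}=0$ and no $\epsilon<1$ exists, so your Neumann-series/zero-propagation justification is a real sharpening rather than a redundancy.
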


\begin{proof}[Proof of Proposition \ref{prop_2.5}]
This is similar to the proof of Proposition \ref{prop_2.4} and we only point out the needed modifications.
First let $\delta_{li}$ be the usual Kronecker delta function and let $c^{(l)}_i$ solves (see \eqref{cor_bound2_e001e})
\begin{align}
c^{(l)}_i= \sum_{j\ne i} \frac{kw_{ij}}{1+k w_{ii}} c^{(l)}_j + \frac 1 {1+kw_{ii}} \delta_{li}. \notag
\end{align}
Then clearly the solution to \eqref{cor_bound2_e001e} can be represented by
\begin{align*}
u_i = \sum_{l} c^{(l)}_i f_l.
\end{align*}
Easy to check that $\epsilon_0= \min_{j,l} c_i^{(l)}>0$. Furthermore  (by taking $f$ to be a constant function) easy
to check that $\sum_{l=0}^{N-1} c^{(l)}_i=1$ for any $i$. Using the fact that $f$ has mean zero,  clearly we have
\begin{align*}
| u_i | =\bigl |    \sum_{l=0}^{N-1}  ( c^{(l)}_i - \epsilon_0) f_l  \bigr|
\le (1-N \epsilon_0) \|f\|_{\infty},
\end{align*}
i.e. $\|u\|_{\infty} \le \epsilon \|f\|_{\infty}$ for  $\epsilon=1-N\epsilon_0<1$.

\end{proof}

\section{proof of Theorem \ref{thm_es_1}}
In this proof we denote by $(,)$ the usual $L^2$ inner product for real-valued functions. Denote
\begin{align*}
H= -\nu \Delta u^{n+1} + A(u^{n+1}-u^n) +f(u^n).
\end{align*}
Here we suppress the notational dependence of $H$ on $n$ for simplicity. The scheme
\eqref{semi_e1} simply reads as
\begin{align*}
\frac{u^{n+1}-u^n} {\tau} = \Delta H.
\end{align*}
Clearly then
\begin{align*}
(u^{n+1}-u^n, H) = \tau (\Delta H, H) = - \tau \| \nabla H\|_2^2.
\end{align*}

We now evaluate $(u^{n+1}-u^n, H)$ by examining the contribution of each term in $H$.
First
\begin{align*}
   & ( u^{n+1}-u^n, -\nu \Delta u^{n+1}) \notag \\
   = \; & -\nu \bigl( (u^{n+1}, \Delta u^{n+1}) - (u^n, \Delta u^{n+1}) \bigr) \notag \\
   = \; & \nu \bigl( \| \nabla u^{n+1} \|_2^2 - ( \nabla u^n, \nabla u^{n+1} ) \bigr) \notag \\
   \ge \; & \nu \bigl(  \frac 12 \|\nabla u^{n+1}\|_2^2 - \frac 1 2 \| \nabla u^n \|_2^2 \bigr).
   \end{align*}
Here we used the simple inequality $a^2+ab \ge \frac 12 a^2 -\frac 12 b^2$ for any $a,b\in \mathbb R$.

Next observe
\begin{align*}
(u^{n+1}-u^n, A (u^{n+1}-u^n) ) = A \| u^{n+1}-u^n \|_2^2.
\end{align*}

Finally
\begin{align*}
( u^{n+1}-u^n, f(u^n) ) = (f(u^n)(u^{n+1}-u^n), 1),
\end{align*}
where $1$ denotes the constant function with value $1$ on $\Omega$.
By the Fundamental Theorem of Calculus, we have
\begin{align*}
    F(u^{n+1}) -F(u^n) & = f(u^n)(u^{n+1}-u^n) +\int_{u^n}^{u^{n+1}} (u^{n+1}-s) f^{\prime}(s) ds \notag \\
    & = f(u^n) (u^{n+1}-u^n) +\int_{u^n}^{u^{n+1}} (u^{n+1}-s) (3s^2-1) ds \notag \\
    &= f(u^n) (u^{n+1}-u^n) +3 \int_{u^n}^{u^{n+1}} (u^{n+1}-s) s^2 ds \notag \\
    & \qquad - \frac 12 (u^{n+1}-u^n)^2.
    \end{align*}
By using Theorem \ref{thm_bound1}, we have $\|u^n\|_{\infty} \le M=\sqrt{\frac{A+1}3}$, $\forall\, n\ge 0$. This gives
\begin{align*}
\left| 3 \int_{u^n}^{u^{n+1}} (u^{n+1}-s)s^2 ds \right|
\le \frac 32 |u^{n+1}-u^n|^2 \cdot M^2.
\end{align*}
Thus
\begin{align*}
& (u^{n+1}-u^n, f(u^n) ) \notag \\
\ge \; & \int_{\Omega} F(u^{n+1}) dx - \int_{\Omega} F(u^n) dx + \frac 12 \| u^{n+1}-u^n \|_2^2 \notag \\
& \qquad -\frac 32 \| u^{n+1}-u^n \|_2^2 \cdot M^2.
\end{align*}

Collecting all the estimates, we get
\begin{align*}
 & (u^{n+1}-u^n, H) \notag \\
 \ge \; & \mathcal E(u^{n+1}) -\mathcal E(u^n) +(A+\frac 12 - \frac 32 M^2) \| u^{n+1}-u^n\|_2^2 \notag \\
 =\; & \mathcal E(u^{n+1}) -\mathcal E(u^n) + \frac A 2 \| u^{n+1}-u^n \|_2^2.
 \end{align*}

The desired inequality then follows easily.

\frenchspacing
\bibliographystyle{plain}

\end{document}